\documentclass[preprint,11pt]{elsarticle}
\usepackage{lmodern}
\usepackage{amsmath,amssymb,amsthm}
\usepackage[margin=1in]{geometry}
\usepackage{microtype}
\usepackage{hyperref}
\usepackage{xcolor}

\numberwithin{equation}{section}
\newtheorem{theorem}{Theorem}[section]
\newtheorem{lemma}[theorem]{Lemma}
\newtheorem{proposition}[theorem]{Proposition}
\newtheorem{corollary}[theorem]{Corollary}

\theoremstyle{definition}
\newtheorem{definition}[theorem]{Definition}

\newtheorem{remark}[theorem]{Remark}

\newcommand{\R}{\mathbb{R}}

\newcommand{\W}{\mathcal{W}}
\newcommand{\supp}{\operatorname{supp}_{+}}
\newcommand{\TP}{\mathrm{TP}_{2}}
\newcommand{\RR}{\mathrm{RR}_{2}}
\newcommand{\fall}[2]{(#1)_{\underline{#2}}}
\newcommand{\transpose}{\mathsf{T}}

\allowdisplaybreaks[4]
\begin{document}

\begin{frontmatter}
\title{Preservation of log-concavity under Hadamard products}
\author[]{Yanxin Liu}
\ead{liu-yanxin@outlook.com}

\author[]{Jianxi Mao\corref{cor1}}
\ead{maojx@dlut.edu.cn}
\cortext[cor1]{Corresponding author}

\address{School of Mathematical Sciences, Dalian University of Technology, Dalian 116024,\\ P. R. China}

\begin{abstract}
For a nonzero real polynomial $p$, let $\W(p)$ denote the numerator of its ordinary generating function. 
We prove that if the coefficients of both $\W(p)$ and $\W(q)$ are nonnegative and log-concave with no internal zeros, then so are the coefficients of $\W(pq)$. 
This provides an affirmative answer to a question of Br\"and\'en, Ferroni, and Jochemko. 
As applications, 
we derive corresponding results for finite products and for Cartesian products of lattice polytopes, answering a question of Ferroni and Higashitani.
\end{abstract}

\begin{keyword}
Hadamard product \sep log-concavity \sep Ehrhart series
\MSC[2020] 05A20\sep 26C10 \sep 26C15
\end{keyword}

\end{frontmatter}
\section{Introduction}
Let $p\in\mathbb{R}[t]$ be a nonzero polynomial of degree $d$. There is a unique polynomial $\W(p)$ of degree at most $d$ such that
\begin{equation}\label{eq:W-definition}
\sum_{n\geq 0} p(n)x^n=\frac{\W(p)(x)}{(1-x)^{d+1}}.
\end{equation}
If $q$ is a polynomial of degree $d_1$, then the generating function of the Hadamard product of the two sequences $(p(n))$ and $(q(n))$ 
is given by
\begin{equation*}
\sum_{n\geq0}p(n)q(n)x^n
=\frac{\W(pq)(x)}{(1-x)^{d+d_1+1}}.
\end{equation*}

The Hadamard product plays an important role in the theory of formal power series, as it frequently encodes natural structural constructions~\cite{AF11,Sta12,Wag92}.
A classic example comes from Ehrhart theory: 
the Hadamard product of the Ehrhart series of two lattice polytopes corresponds to the Ehrhart series of the Cartesian product of the two polytopes ~\cite{BR15,FH24}. 
More generally, in commutative algebra, the Hadamard product of the Hilbert series of two standard graded algebras is the Hilbert series of their Segre product (see, for instance,~\cite{FK14}).
In this paper, we study log-concavity for sequences with no internal zeros.
\begin{definition}\label{def:LC}
A nonnegative sequence $w=(w_0,\ldots,w_d)$
is \emph{log-concave with no internal zeros} (or LC-NIZ) if
\begin{equation*}
w_i^2\geq w_{i-1}w_{i+1}\qquad \textrm{for }\, 1\le i \le d-1,
\end{equation*}
and its positive support
$\supp(w)=\{i:w_i>0\}$ is an integer interval.
A polynomial is said to be LC-NIZ when its
coefficient sequence is LC-NIZ.
\end{definition}

Hoggar~\cite{Hog74} proved that ordinary polynomial multiplication preserves the LC-NIZ property. The numerator $\W(pq)$, however, generally differs from $\W(p)\W(q)$, and its coefficients involve a binomial kernel instead of ordinary convolution.

Some properties are known to be preserved under this operation.
For example, Wagner~\cite{Wag92} proved that if $\W(p)$ and $\W(q)$ have only nonpositive real zeros, then so does $\W(pq)$. 
Br\"and\'en, Ferroni, and Jochemko~\cite{BFJ24} established the preservation of ultra-log-concavity. 
Here a nonnegative sequence $(a_i)_{i=0}^{d}$ is ultra-log-concave of order $d$ whenever $(a_i/\binom{d}{i})_{i=0}^{d}$ is LC-NIZ. 
They also showed that ultra-log-concavity of $\W(p)$, combined with the LC-NIZ property of $\W(q)$, implies the LC-NIZ property of $\W(pq)$. 
Furthermore, in~\cite[Question 6.1]{BFJ24}, they asked whether the same conclusion holds when both factors are assumed merely to be LC-NIZ.
Our main result gives an affirmative answer to this question.
\begin{theorem}\label{thm:main}
If $\W(p)$ and $\W(q)$ are LC-NIZ, then so is $\W(pq)$.
\end{theorem}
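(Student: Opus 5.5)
The plan is to make the coefficients of $\W(pq)$ explicit as a bilinear expression in the coefficients of $\W(p)$ and $\W(q)$, and then to prove log-concavity by a total-positivity argument. Write $\W(p)=\sum_{i=0}^{d}a_ix^i$ and $\W(q)=\sum_{j=0}^{e}b_jx^j$. Expanding \eqref{eq:W-definition} gives
\begin{equation*}
p(n)=\sum_i a_i\binom{n+d-i}{d},\qquad q(n)=\sum_j b_j\binom{n+e-j}{e}.
\end{equation*}
The classical product formula for binomial coefficients (Vandermonde-type, as used by Wagner) gives
\begin{equation*}
\binom{n+d-i}{d}\binom{n+e-j}{e}=\sum_k K(i,j,k)\binom{n+d+e-k}{d+e}.
\end{equation*}
Here $K(i,j,k)\ge 0$ is an explicit product of two binomial coefficients, essentially $\binom{d-i+j}{k-i}\binom{e-j+i}{k-j}$. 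Consequently
\begin{equation*}
w_k:=[x^k]\W(pq)=\sum_{i,j}a_ib_jK(i,j,k).
\end{equation*}
I would first check this kernel formula carefully, including its boundary ranges.

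Next I would settle the no-internal-zeros part directly. Since $a$ and $b$ have interval supports and $K(i,j,k)>0$ exactly on an explicit polyhedral range of $k$, the support of $w$ is a union of integer intervals. Consecutive intervals in this union overlap, so $\supp(w)$ is an integer interval.

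For log-concavity I would use a two-stage composition. First fix $a$ and form the kernel $M_a(j,k)=\sum_i a_iK(i,j,k)$, so that $w_k=\sum_j b_jM_a(j,k)$. The standard route to log-concavity of such a transform is a Cauchy–Binet/basic composition argument in the spirit of Hoggar. That argument writes $w_k^2-w_{k-1}w_{k+1}$ as a double sum $\sum_{j,l}b_jb_l\bigl(M_a(j,k)M_a(l,k)-M_a(j,k-1)M_a(l,k+1)\bigr)$ and symmetrizes over $j\leftrightarrow l$ and over shifts $(j,l)\mapsto(j-1,l+1)$. After symmetrization, nonnegativity follows from the log-concavity of $b$ (which gives $b_jb_l\ge b_{j-1}b_{l+1}$ for $j\le l$) together with two inequalities on $M_a$:
\begin{enumerate}
\item[(i)] $M_a(j,k)$ is $\TP$ in $(j,k)$;
\item[(ii)] a shifted version of (i), namely that the matrix with entries $M_a(j,k)$ and $M_a(j+1,k+1)$ is $\TP$ jointly with unit diagonal shifts.
\end{enumerate}
Combined, (i) and (ii) say that the pairing identity of Hoggar's proof goes through with the binomial kernel in place of the convolution kernel. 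By the symmetry of $K$ in the roles of $(i,d)$ and $(j,e)$, it then suffices to prove (i) and (ii) for $M_a$ whenever $a$ is LC-NIZ. This reduces in turn, by the same pairing argument one level down, to verifying finitely many $2\times 2$ determinantal inequalities for the explicit binomial kernel $K$. Those inequalities I would prove by writing ratios of binomials as products of monotone rational factors.

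The main obstacle I expect is exactly this middle step: the pairing argument for $w_k^2-w_{k-1}w_{k+1}$ with a three-index kernel. Here the terms do not cancel as cleanly as for ordinary convolution, because $K$ is not a function of $k-i-j$. My first attempt would be to group the terms by $s=i+j$ and by the differences $i-i'$ and $j-j'$. I would then try to show that each group is of the form (LC-difference of $a$ or $b$) $\times$ (nonnegative kernel expression). The nonnegativity of the kernel expression would come from an identity expressing $K(i,j,k)K(i',j',k)-K(i,j,k-1)K(i',j',k+1)$ for paired indices as a nonnegative combination of binomials.

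If this direct approach fails, I would fall back on an interpolation argument. The idea is to deform one factor from an ultra-log-concave sequence, where the result of Br\"and\'en–Ferroni–Jochemko applies, to a general LC-NIZ sequence. Along the way one would check that the inequality $w_k^2\ge w_{k-1}w_{k+1}$ cannot first fail at an interior parameter value. However, I regard the determinantal pairing route as the primary plan.
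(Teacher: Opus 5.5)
Your kernel formula agrees with Lemma~\ref{coefficients}, and your support argument for no internal zeros is fine; it is essentially the paper's verification of (C3). The gap is the log-concavity step, which is the whole content of the theorem and which your proposal leaves open.

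Hoggar's pairing works because a Toeplitz kernel satisfies $M(j,k\pm1)=M(j\mp1,k)$, so the shift $(j,l)\mapsto(j-1,l+1)$ can be moved from the kernel onto $b$. For $M_a$ this fails. The reindexed terms $b_{j-1}b_{l+1}M_a(j-1,k-1)M_a(l+1,k+1)$ do not combine with $b_jb_lM_a(j,k)M_a(l,k)$ into a product of the form (log-concavity defect of $b$)$\times$(nonnegative kernel expression). Your condition (ii) is never stated precisely enough to say what would close the argument.

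More seriously, the claimed reduction ``one level down'' to finitely many $2\times2$ inequalities for $K$ has no basis. The standard route from $2\times2$ data on a kernel to $\TP$ of a marginal such as $M_a(j,k)=\sum_i a_iK(i,j,k)$ is multivariate total positivity. But $K$ is $\TP$ in $(i,k)$ and in $(j,k)$, and $\RR$ in $(i,j)$, since $K(i,j+1,k)/K(i,j,k)=\frac{(d-i+j+1)(k-j)}{(d+j+1-k)(e-j+i)}$ decreases in $i$. Reversing a variable flips two of these three types, so no reorientation removes the reverse-regular pair. You can already see the effect for $d=e=2$: there
$M_a(0,1)M_a(1,2)-M_a(0,2)M_a(1,1)=9a_0^2+15a_0a_1+9a_1^2-6a_0a_2+3a_1a_2$,
which is negative for $a=(1,0,a_2)$ with $a_2>3/2$. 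So even your property (i) is a log-concavity-preservation statement for a non-Toeplitz kernel, of the same nature as the theorem itself. It does not follow from $2\times2$ inequalities on $K$ by the usual composition rules. Your fallback deformation argument likewise gives no mechanism excluding a first failure at an interior parameter.

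The paper gets around the mixed sign pattern with two ideas your plan lacks.

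First, it lifts $w_k$ to the array $C_{r,s}=\sum_{i,j}H_{i,j}a_{r-i}b_{s-j}$. Then $C=T_aHT_b^{\transpose}$ is $\RR$ by Cauchy--Binet, and $w_k=C_{k,k}$. The paper then proves (Lemma~\ref{lem:diagonal}) that an $\RR$ matrix with log-concave rows and columns and interval diagonal support has log-concave diagonal. The proof multiplies $C_{k,k-1}^2\ge C_{k-1,k-1}C_{k+1,k-1}$ and $C_{k+1,k}^2\ge C_{k+1,k-1}C_{k+1,k+1}$ and compares with $C_{k,k-1}C_{k+1,k}\le C_{k,k}C_{k+1,k-1}$. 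Thus the reverse regularity of $H$ becomes the tool rather than the obstruction.

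Second, the rows and columns of $C$ are convolutions of $b$ with $H^{\transpose}\alpha$ and of $a$ with $H\beta$, where $\alpha,\beta$ are reversed windows of $a,b$. So one needs that $H$ and $H^{\transpose}$ map LC-NIZ vectors to LC-NIZ vectors. This is the genuinely analytic input. The paper identifies $H$, up to reversal and scaling, with the beta-integral transform $w\mapsto\bigl((m+1)\binom mi\int_0^1t^i(1-t)^{m-i}F(t)\,dt\bigr)_i$ of the Bernstein polynomial $F$ with coefficients $w$. It then realizes that transform as a limit of products of the degree-elevation and degree-reduction matrices $E_d$, $D_d$. Each of these preserves LC-NIZ by an elementary three-term inequality.

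Without a substitute for these two steps, your outline does not yet prove the theorem.
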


The paper is organized as follows. 
In Section~\ref{sec:Auxres}, we review some results on $\TP$ and $\RR$ matrices and establish the coefficient formula. 
In Section~\ref{sec:Berpoly}, we introduce Bernstein-type operators and show
that the kernel matrix preserves the LC-NIZ property.
In Section~\ref{sec:proofmain}, we prove Theorem~\ref{thm:main}. 
Finally, we present an application to Ehrhart theory in Section~\ref{sec:app}.

\section{Preliminaries}
\label{sec:Auxres}
Let $p\in\mathbb{R}[t]$ be a nonzero polynomial of degree $d$. 
The identity
\begin{equation*}
\sum_{n\geq0}\binom{n+d-i}{d}x^n
=\frac{x^i}{(1-x)^{d+1}},
\quad 0\leq i\leq d,
\end{equation*}
shows that if $\W(p)(x)=\sum_{i=0}^d a_i x^i$, then
\begin{equation}\label{eq:binomial-expansion}
p(t)=\sum_{i=0}^d a_i\binom{t+d-i}{d}.
\end{equation}

The following lemma restates~\cite[Eq. (9)]{FK14} in our notation.
\begin{lemma}\label{coefficients}
Suppose
\begin{equation*}
\W(p)(x)=\sum_{i=0}^{d_1} a_i x^i,\quad
\W(q)(x)=\sum_{j=0}^{d_2} b_j x^j,
\qquad \deg p=d_1,\quad \deg q=d_2.
\end{equation*}
If $\W(pq)(x)=\sum_{k=0}^{d_1+d_2}c_kx^k$, then
\begin{equation}\label{eq:c-original}
c_k=\sum_{i=0}^{d_1}\sum_{j=0}^{d_2}
a_i b_j
\binom{d_1-i+j}{k-i}\binom{d_2-j+i}{k-j}.
\end{equation}
Define the matrix $H^{d_1,d_2}=
\left[ H^{d_1,d_2}_{i,j} \right]_{i,j}$ with
\begin{equation}\label{eq:H}
H^{d_1,d_2}_{i,j}
=\binom{d_1+i-j}{i}\binom{d_2+j-i}{j},
\quad 0\leq i\leq d_2,\ 0\leq j\leq d_1.
\end{equation}
Then $H^{d_1,d_2}$ is a matrix with strictly positive entries and
\begin{equation*}
c_k=\sum_{i=0}^{d_2} \sum_{j=0}^{d_1}
H^{d_1,d_2}_{i,j}\,a_{k-i}b_{k-j}.
\end{equation*}
\end{lemma}


Following Karlin~\cite{Kar68}, a matrix $M$ with nonnegative entries is \emph{totally positive of order two} (or \emph{$\TP$}) if
\begin{equation*}
M_{r_1,s_1}M_{r_2,s_2}
-M_{r_1,s_2}M_{r_2,s_1}\geq0
\quad(r_1<r_2,\ s_1<s_2);
\end{equation*}
and it is \emph{reverse regular of order two} (or \emph{$\RR$}) if 
\begin{equation*}
M_{r_1,s_1}M_{r_2,s_2}
-M_{r_1,s_2}M_{r_2,s_1}\leq0
\quad(r_1<r_2,\ s_1<s_2).
\end{equation*}
We first review some basic facts about $\TP$ and $\RR$ matrices.
Note that a sequence $w$ is LC-NIZ if and only if $w_i w_j\ge w_{i-1} w_{j+1}$ for all $i\le j$.
The first lemma 
follows directly from the definitions,
and the second follows immediately from the classical Cauchy--Binet formula.
\begin{lemma}\label{lem:toeplitz}
A sequence $w$ is LC-NIZ if and only if 
its Toeplitz matrix
$[w_{i-j}]_{i,j}
$
is $\TP$. 
\end{lemma}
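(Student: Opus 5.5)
We have to prove Lemma~\ref{lem:toeplitz}: a nonnegative sequence $w=(w_0,\dots,w_d)$, extended by $w_k=0$ for $k<0$ and $k>d$, is LC-NIZ if and only if the Toeplitz matrix $T=[w_{i-j}]_{i,j}$ is $\TP$. The plan is to reduce both directions to the reformulation noted just before the lemma: $w$ is LC-NIZ if and only if $w_iw_j\ge w_{i-1}w_{j+1}$ for all $i\le j$, with indices ranging over all integers and $w$ zero outside $[0,d]$.

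First rewrite a $2\times2$ minor of $T$. For rows $r_1<r_2$ and columns $s_1<s_2$, the $\TP$ condition reads
\begin{equation*}
w_{r_1-s_1}\,w_{r_2-s_2}\ \ge\ w_{r_1-s_2}\,w_{r_2-s_1}.
\end{equation*}
Set $a=r_1-s_2$ and $b=r_2-s_1$, so that $a<b$. The two indices on the left are $r_1-s_1$ and $r_2-s_2$; they lie strictly between $a$ and $b$ and have sum $a+b$. Write $u=\min(r_1-s_1,r_2-s_2)$ and $v=a+b-u$. Then the condition becomes $w_uw_v\ge w_aw_b$ for all $a<u\le v<b$ with $u+v=a+b$. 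Conversely, every such quadruple arises from some choice of $r_1<r_2$ and $s_1<s_2$, because the index set of the Toeplitz matrix is taken large enough. So $T$ is $\TP$ exactly when this inequality holds for all such quadruples.

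It therefore remains to show that this family of inequalities is equivalent to the reformulated LC-NIZ condition.
\begin{itemize}
\item[$(\Leftarrow)$] If the matrix condition holds, take $a=i-1$, $u=i$, $v=j$, $b=j+1$ with $i\le j$. This gives $w_iw_j\ge w_{i-1}w_{j+1}$ directly.
\item[$(\Rightarrow)$] Assume the one-step inequalities $w_iw_j\ge w_{i-1}w_{j+1}$ for $i\le j$. To get $w_uw_v\ge w_aw_b$, move $a$ up and $b$ down one step at a time until they reach $u$ and $v$. If $w_aw_b=0$ there is nothing to prove. Otherwise $w_a,w_b>0$. By the no-internal-zeros property every intermediate entry is then positive, since all indices involved lie in $[a,b]\subseteq\supp(w)$. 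Log-concavity then makes $w_{k+1}/w_k$ nonincreasing on this interval. Hence
\begin{equation*}
\frac{w_u}{w_a}=\prod_{k=a}^{u-1}\frac{w_{k+1}}{w_k}\ \ge\ \prod_{k=v}^{b-1}\frac{w_{k+1}}{w_k}=\frac{w_b}{w_v}.
\end{equation*}
The inequality holds because both products have $u-a=b-v$ factors, and each ratio on the left has a smaller index than the corresponding ratio on the right, since $u-1<v$.
\end{itemize}

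Finally, the reformulated condition itself must be checked against Definition~\ref{def:LC}. The step needing the most care is the boundary behaviour: the extension by zeros together with the condition at all integer indices is exactly what encodes no internal zeros. If $w_i=0$ with $w_{i-1}>0$ and $w_{j+1}>0$ for some $j>i$, then taking the pair $(i,j)$ gives $w_iw_j=0<w_{i-1}w_{j+1}$, a violation. Conversely, ordinary log-concavity plus an interval support gives the one-step inequalities by the ratio argument above. With this settled the equivalence follows, and the whole lemma is just a careful unwinding of the definitions.
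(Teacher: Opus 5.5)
Your proof is correct and follows essentially the paper's route. The paper only notes that LC-NIZ is equivalent to $w_iw_j\ge w_{i-1}w_{j+1}$ for all $i\le j$ and says the lemma "follows directly from the definitions"; you supply the omitted details (minors as quadruples $a<u\le v<b$ with $u+v=a+b$, reduction to one-step inequalities, and zero extension encoding no internal zeros). One small slip and one simplification: in the last paragraph allow $j\ge i$ rather than $j>i$ so that a single isolated zero is covered, and note that the chain $w_aw_b\le w_{a+1}w_{b-1}\le\cdots\le w_uw_v$ follows directly from the one-step inequalities, so the positivity/ratio argument is not needed.
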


\begin{lemma}\label{lem:composition}
The product of two $\TP$ matrices is
$\TP$. 
The product of a $\TP$ matrix and an $\RR$ matrix is $\RR$.
In particular, convolution of finite LC-NIZ sequences
is LC-NIZ.
\end{lemma}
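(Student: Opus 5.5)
The statement to prove is Lemma~\ref{lem:composition}: products of $\TP$ matrices are $\TP$, $\TP$ times $\RR$ is $\RR$, and convolution preserves LC-NIZ. The plan is to apply the Cauchy--Binet formula to $2\times2$ minors and then read off the convolution statement through Lemma~\ref{lem:toeplitz}.

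\textbf{Minors of a product.} Let $M$ and $N$ have nonnegative entries and compatible sizes, and put $P=MN$. Fix rows $r_1<r_2$ and columns $s_1<s_2$. By Cauchy--Binet,
\begin{equation*}
P_{r_1,s_1}P_{r_2,s_2}-P_{r_1,s_2}P_{r_2,s_1}
=\sum_{k<l}\bigl(M_{r_1,k}M_{r_2,l}-M_{r_1,l}M_{r_2,k}\bigr)\bigl(N_{k,s_1}N_{l,s_2}-N_{k,s_2}N_{l,s_1}\bigr).
\end{equation*}
The same identity can be checked directly by expanding both sides and cancelling the diagonal terms $k=l$.

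\textbf{Sign of each term.} If $M$ and $N$ are both $\TP$, every summand is a product of two nonnegative factors, so $P$ is $\TP$. If $M$ is $\TP$ and $N$ is $\RR$, every summand is a nonnegative number times a nonpositive one, so the minor of $P$ is $\le0$. The case with the $\RR$ factor on the left is symmetric. In all cases $P$ has nonnegative entries, being a product of nonnegative matrices. This proves the first two assertions.

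\textbf{Convolution.} Take finite LC-NIZ sequences $u$ and $v$ and extend them by zero to all of $\mathbb{Z}$. Their Toeplitz matrices $T_u=[u_{i-j}]$ and $T_v=[v_{i-j}]$ satisfy $T_uT_v=T_{u*v}$, since $\sum_k u_{i-k}v_{k-j}=(u*v)_{i-j}$. All of these sums are finite because the sequences have finite support.

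By Lemma~\ref{lem:toeplitz}, $T_u$ and $T_v$ are $\TP$. The Cauchy--Binet identity above still holds for these bi-infinite matrices, since each sum is finite. Hence $T_{u*v}$ is $\TP$, and by Lemma~\ref{lem:toeplitz} again $u*v$ is LC-NIZ.

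\textbf{Main obstacle.} The only delicate point is to make sure Lemma~\ref{lem:toeplitz} is applied with the zero-extended, doubly infinite indexing. A finite truncation would lose the no-internal-zeros information that is encoded in the minors straddling the support. With the indexing over $\mathbb{Z}$, that information is retained in the $2\times2$ minors, and the argument is otherwise routine.
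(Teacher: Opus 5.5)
Your proof is correct and is essentially the paper's argument: the paper simply invokes the Cauchy--Binet formula for $2\times 2$ minors of a product, and the convolution clause follows from Lemma~\ref{lem:toeplitz} via $T_uT_v=T_{u*v}$, exactly as you do. Your closing caution is slightly overstated, since any square truncation with at least as many rows and columns as the sequence has terms already detects internal zeros, but working with zero-extended indexing over $\mathbb{Z}$ is perfectly valid.
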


For a nonnegative integer $r$, let
$\fall xr=x(x-1)\cdots(x-r+1)$, with
$\fall x0=1$.

\begin{proposition}\label{lem:H-RR}
The matrix $H^{d_1,d_2}$ is $\RR$.
\end{proposition}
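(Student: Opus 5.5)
The plan is to verify the $\RR$ inequality first for \emph{consecutive} $2\times 2$ minors, and then extend it to arbitrary ones using the strict positivity of $H^{d_1,d_2}$ from Lemma~\ref{coefficients}.

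\textbf{Reduction to consecutive minors.} For a matrix with strictly positive entries, the $\RR$ condition is equivalent to
\[
H_{i,j}H_{i+1,j+1}\le H_{i,j+1}H_{i+1,j}\qquad\text{for all admissible } i,j.
\]
Indeed, this says that the ratio $H_{i+1,j}/H_{i,j}$ is nondecreasing in $j$. Chaining these ratios over consecutive rows $i=r_1,\dots,r_2-1$ shows that $H_{r_2,j}/H_{r_1,j}$ is nondecreasing in $j$. Comparing the columns $s_1<s_2$ then gives exactly the inequality $H_{r_1,s_1}H_{r_2,s_2}\le H_{r_1,s_2}H_{r_2,s_1}$. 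Strict positivity is needed so that all these divisions are legitimate.

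\textbf{Splitting the entries.} Write $m=i-j$, so that
\[
H_{i,j}=\binom{d_1+m}{i}\binom{d_2-m}{j}.
\]
In the consecutive minor, the entries $H_{i,j}$ and $H_{i+1,j+1}$ both have shift $m$, while $H_{i,j+1}$ has shift $m-1$ and $H_{i+1,j}$ has shift $m+1$. The desired inequality therefore factors into a $d_1$-part and a $d_2$-part, and I will show each part separately.

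\textbf{The $d_1$-part.} Put $N=d_1+m$. The claim is
\[
\binom{N}{i}\binom{N}{i+1}\le \binom{N-1}{i}\binom{N+1}{i+1}.
\]
Use the identities $\binom{N-1}{i}=\binom{N}{i}\frac{N-i}{N}$ and $\binom{N+1}{i+1}=\binom{N}{i+1}\frac{N+1}{N-i}$. With these, the right-hand side equals the left-hand side times $\frac{N+1}{N}>1$.

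\textbf{The $d_2$-part.} Put $M=d_2-m$. The claim is
\[
\binom{M}{j}\binom{M}{j+1}\le \binom{M+1}{j+1}\binom{M-1}{j}.
\]
The analogous identities show that the right-hand side equals the left-hand side times $\frac{M+1}{M}$. Multiplying the two factor inequalities, which involve only nonnegative quantities, gives the consecutive-minor inequality.

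\textbf{Main obstacle: the boundary cases.} The only delicate point is checking that the divisions above are valid, i.e.\ that $N$, $N-i$, $M$ and $M-j$ are positive. In a consecutive minor we have $j+1\le d_1$ and $i+1\le d_2$. Hence
\[
N-i=d_1-j\ge 1,\qquad M-j=d_2-i\ge 1,
\]
and consequently $N\ge N-i\ge 1$ and $M\ge M-j\ge 1$. So the falling-factorial manipulations (e.g.\ writing $\binom{N}{i}=\fall{N}{i}/i!$) are all legitimate, and no degenerate binomials appear.
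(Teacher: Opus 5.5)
Your argument is correct and is essentially the paper's. The paper computes the cross-ratio $\frac{H_{i_1,j_1}H_{i_2,j_2}}{H_{i_1,j_2}H_{i_2,j_1}}$ for arbitrary $i_1<i_2$, $j_1<j_2$ directly, as the product of a $d_1$ falling-factorial ratio and a $d_2$ falling-factorial ratio, each $<1$; this avoids the reduction to adjacent minors, and your factor $\frac{N}{N+1}\cdot\frac{M}{M+1}$ is exactly its case $i_2-i_1=j_2-j_1=1$.

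One wording slip: the adjacent-minor inequality says $H_{i+1,j}/H_{i,j}$ is non\emph{increasing} in $j$, not nondecreasing. With ``nondecreasing'' your column comparison would produce the $\TP$ inequality instead, although the RR inequality you state as the conclusion is the right one.
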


\begin{proof}
If $d_1=0$ or $d_2=0$, the statement holds trivially. 
Let $H=H^{d_1,d_2}$.
For $0\leq i_1<i_2\le d_2$ and $0\leq j_1< j_2\le d_1$, a direct computation gives
\begin{equation*}
\frac{H_{i_1,j_1}H_{i_2,j_2}}{H_{i_1,j_2}H_{i_2,j_1}}
= \frac{\fall {d_1+i_1-j_1}{j_2-j_1}\fall{d_2+j_2-i_2}{j_2-j_1}}{\fall{d_1+i_2-j_1}{j_2-j_1}\fall{d_2+j_2-i_1}{j_2-j_1}} < 1.
\end{equation*}
This proves that $H$ is $\RR$.
\end{proof}
\section{Two matrices arising from Bernstein polynomials}
\label{sec:Berpoly}
Let $A_d=[A_d(i,j)]_{i=0,j=0}^{d+1,d}$ be a $(d+2)\times (d+1)$ matrix with entries 
$$A_d(i,i)=d+1-i, \quad A_d(i+1,i)=i+1$$ and  $A_d(i,j)=0$ otherwise.
Define  
$$E_d=\frac{1}{d+1}A_d, \quad d\ge 0,$$
$$D_d=\frac{1}{d+1}A^{\mathsf T}_{d-1}, \quad d\ge 1.$$
Clearly, for $d\ge 1,$ 
\begin{equation}\label{transpose}
D_d=\frac{d}{d+1}E^{\mathsf T}_{d-1}.
\end{equation}
For a sequence $w=(w_0,\ldots,w_d)$,  we have 
\begin{equation}\label{eq:E}
(E_dw)_i=
\frac{i w_{i-1}+(d+1-i)w_i}{d+1},
\quad 0\leq i\leq d+1,
\end{equation}
\begin{equation}\label{eq:D}
(D_dw)_i=
\frac{(d-i)w_i+(i+1)w_{i+1}}{d+1},
\quad 0\leq i\leq d-1,
\end{equation}
where missing entries are zero.

The matrix $E_d$ is closely related to the Bernstein transform.
Let $\R_d[t]$ be the space of polynomials with real coefficients of degree at most $d$.
The Bernstein polynomials 
\begin{equation*}
B_{d,i}(t)=\binom{d}{i} t^i(1-t)^{d-i}\quad \text{for} \ 0\le i \le d
\end{equation*}
form a basis of $\R_d[t]$
and satisfy the following recurrence
\begin{equation}
\label{eq:recurB}
B_{d,i}(t)
=\frac{d+1-i}{d+1}B_{d+1,i}(t)
+\frac{i+1}{d+1}B_{d+1,i+1}(t).
\end{equation}
We refer the reader to~\cite{Lor86,Pen99} for more details on Bernstein polynomials.

\begin{lemma}\label{lem:rep}
Let $f(t)=\sum_{i=0}^{d}w_iB_{d,i}(t)$.  Then
$$
f(t)=\sum_{i=0}^{d+1}(E_dw)_i B_{d+1,i}(t).
$$
\end{lemma}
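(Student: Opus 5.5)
The plan is to substitute the degree-elevation recurrence \eqref{eq:recurB} into the expansion of $f$ and collect coefficients of each $B_{d+1,i}$. Since the Bernstein polynomials $B_{d+1,0},\ldots,B_{d+1,d+1}$ form a basis of $\R_{d+1}[t]$, the resulting coefficients are uniquely determined. It then only remains to check that they coincide with the entries of $E_dw$ given in \eqref{eq:E}.

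Concretely, we start from $f(t)=\sum_{i=0}^{d}w_iB_{d,i}(t)$ and replace each $B_{d,i}$ using \eqref{eq:recurB}:
\begin{equation*}
f(t)=\sum_{i=0}^{d}w_i\frac{d+1-i}{d+1}B_{d+1,i}(t)+\sum_{i=0}^{d}w_i\frac{i+1}{d+1}B_{d+1,i+1}(t).
\end{equation*}
In the second sum we shift the index $i\mapsto i-1$, so that it runs over $1\le i\le d+1$ with coefficient $\frac{i}{d+1}w_{i-1}$ on $B_{d+1,i}$. Adopting the convention $w_{-1}=w_{d+1}=0$, both sums range over $0\le i\le d+1$. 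The coefficient of $B_{d+1,i}$ is then
\begin{equation*}
\frac{i\,w_{i-1}+(d+1-i)w_i}{d+1}.
\end{equation*}
By \eqref{eq:E}, this is exactly $(E_dw)_i$.

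The recurrence \eqref{eq:recurB} is not proved earlier in the paper, so we verify it directly. The right-hand side equals
\begin{equation*}
\frac{d+1-i}{d+1}\binom{d+1}{i}t^i(1-t)^{d+1-i}+\frac{i+1}{d+1}\binom{d+1}{i+1}t^{i+1}(1-t)^{d-i}.
\end{equation*}
Both binomial prefactors reduce to $\binom{d}{i}$. Factoring out $\binom{d}{i}t^i(1-t)^{d-i}$ leaves $(1-t)+t=1$, which gives $B_{d,i}(t)$.

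There is no real obstacle here. The only point needing care is the boundary terms $i=0$ and $i=d+1$. At these indices the convention that missing entries are zero, together with the vanishing factors $i=0$ and $d+1-i=0$, makes the formula consistent.
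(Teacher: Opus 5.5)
Your proposal is correct and matches the paper's proof: substitute the recurrence \eqref{eq:recurB}, shift the index in the second sum, and read off the coefficient $\bigl(i\,w_{i-1}+(d+1-i)w_i\bigr)/(d+1)=(E_dw)_i$ of $B_{d+1,i}$. Your direct check of \eqref{eq:recurB}, which the paper cites from the literature instead of proving, is a correct addition. The remark about the Bernstein basis and uniqueness of coefficients is harmless but not needed, since only the identity itself is claimed.
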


\begin{proof}
By~\eqref{eq:recurB}, we have 
\begin{align*}
f(t)&=\sum_{i=0}^{d}w_i\left( \frac{d+1-i}{d+1}B_{d+1,i}(t)
+\frac{i+1}{d+1}B_{d+1,i+1}(t) \right)\\
&=\sum_{i=0}^{d+1}\left( \frac{(d+1-i)w_i}{d+1}+\frac{i\,w_{i-1}}{d+1} \right)B_{d+1,i}(t)\\
&=\sum_{i=0}^{d+1}(E_dw)_i B_{d+1,i}(t).
\end{align*}
This completes the proof.
\end{proof}

\begin{lemma}\label{lem:E&D}
Let $w=(w_0,\ldots,w_d)$ be a sequence of nonnegative numbers.
\begin{itemize}
\item For $n\geq d$,
let 
$$
E^{(n)}:=E_{n-1}\cdot E_{n-2}\cdots E_{d+1}\cdot E_d,
$$
and $E^{(n)}w=w^{(n)}=\{w_i^{(n)}\}_{i=0}^n$.
Then
\begin{equation}\label{eq:elevation-weights}
w_i^{(n)}
=\sum_{k=0}^d
\frac{\binom{d}{k}\binom{n-d}{i-k}}{\binom{n}{i}}\, w_k,
\quad 0\leq i\leq n.
\end{equation}
\item  For $d\geq m$, let 
$$D^{(m)}:=D_{m+1}\cdot D_{m+2}\cdots  D_{d-1}\cdot D_d,$$
and $D^{(m)}w=w^{(m)}=\{w_i^{(m)}\}_{i=0}^m$,
where $D^{(m)}$ is the identity for $d=m$. Then 
\begin{equation}\label{eq:del-weights}
w_i^{(m)}
=\sum_{j=0}^d
\frac{\binom{j}{i}\binom{d-j}{m-i}}{\binom{d+1}{m+1}}\, w_j,
\quad 0\leq i\leq m.
\end{equation}
\end{itemize}
\end{lemma}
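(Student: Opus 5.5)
Both identities will be proved through the Bernstein representation of Lemma~\ref{lem:rep} and the duality~\eqref{transpose}. An alternative is a direct induction on the number of factors, but the Bernstein route avoids most of the bookkeeping.

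\emph{Elevation formula~\eqref{eq:elevation-weights}.} Let $f(t)=\sum_{k=0}^d w_kB_{d,k}(t)$. Applying Lemma~\ref{lem:rep} repeatedly gives
\[
f(t)=\sum_{i=0}^n w^{(n)}_iB_{n,i}(t).
\]
Since the Bernstein polynomials of degree $n$ form a basis of $\R_n[t]$, the vector $w^{(n)}$ is uniquely determined by $f$. So it suffices to expand each $B_{d,k}$ in the degree-$n$ basis. Write
\[
B_{d,k}(t)=\binom dk t^k(1-t)^{d-k}\bigl(t+(1-t)\bigr)^{n-d}
\]
and expand the last factor binomially. This gives
\[
B_{d,k}=\sum_{i}\frac{\binom dk\binom{n-d}{i-k}}{\binom ni}B_{n,i},
\]
which is exactly~\eqref{eq:elevation-weights}. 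If one prefers to avoid uniqueness, an induction on $n$ also works: the step is the identity
\[
\frac{i\binom{n-1-d}{i-1-k}}{n\binom{n-1}{i-1}}+\frac{(n-i)\binom{n-1-d}{i-k}}{n\binom{n-1}{i}}=\frac{\binom{n-d}{i-k}}{\binom ni},
\]
which reduces to Pascal's rule after using $i/\bigl(n\binom{n-1}{i-1}\bigr)=1/\binom ni=(n-i)/\bigl(n\binom{n-1}{i}\bigr)$.

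\emph{Degree-reduction formula~\eqref{eq:del-weights}.} By~\eqref{transpose}, $D_j=\frac{j}{j+1}E_{j-1}^{\mathsf T}$. Hence
\[
D^{(m)}=\prod_{j=m+1}^{d}\frac{j}{j+1}\,\bigl(E_{d-1}\cdots E_m\bigr)^{\mathsf T}=\frac{m+1}{d+1}\bigl(E^{(d)}\bigr)^{\mathsf T},
\]
where $E^{(d)}$ is now built from the starting degree $m$. The first part, with the roles $(d,n)$ replaced by $(m,d)$, gives
\[
E^{(d)}_{j,i}=\frac{\binom mi\binom{d-m}{j-i}}{\binom dj}.
\]
Consequently
\[
w^{(m)}_i=\frac{m+1}{d+1}\sum_j\frac{\binom mi\binom{d-m}{j-i}}{\binom dj}\,w_j .
\]

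The remaining step, which is where the only real work lies, is the binomial identity
\[
\frac{m+1}{d+1}\cdot\frac{\binom mi\binom{d-m}{j-i}}{\binom dj}=\frac{\binom ji\binom{d-j}{m-i}}{\binom{d+1}{m+1}}.
\]
To check it, expand everything into factorials. Multiplying both sides by $\binom{d+1}{m+1}$ and using $\binom{d+1}{m+1}\frac{m+1}{d+1}=\binom dm$, it becomes
\[
\binom dm\binom mi\binom{d-m}{j-i}=\binom dj\binom ji\binom{d-j}{m-i}.
\]
Both sides count the ways to choose disjoint sets of sizes $i$, $m-i$, $j-i$ from a $d$-set; equivalently, both equal the multinomial coefficient $\frac{d!}{i!\,(m-i)!\,(j-i)!\,(d-m-j+i)!}$. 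This completes the plan; the case $d=m$ is trivial, since the identity matrix agrees with the formula there.
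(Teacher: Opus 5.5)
Your proposal is correct and follows essentially the same route as the paper. For the elevation formula, both you and the paper multiply $B_{d,k}$ by $(t+(1-t))^{n-d}$ and read off coefficients in the degree-$n$ Bernstein basis; for the reduction formula, both write $D^{(m)}=\frac{m+1}{d+1}(E_{d-1}\cdots E_m)^{\mathsf T}$ using \eqref{transpose} and finish with a binomial identity. The differences are only cosmetic: you add an optional inductive check via Pascal's rule, and you verify the final identity by a multinomial (double-counting) argument where the paper expands factorials directly.
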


\begin{proof}
By Lemma~\ref{lem:rep},
\begin{equation}\label{eq:Iteraf}
f(t)=\sum_{i=0}^{d}w_iB_{d,i}(t)=\sum_{i=0}^{n}w^{(n)}_iB_{n,i}(t).
\end{equation}
For $n\ge d,$ we have 
\begin{align*}
B_{d,i}(t)&=B_{d,i}(t)(t+(1-t))^{n-d}=\binom{d}{i}t^i(1-t)^{d-i}(t+(1-t))^{n-d}\\
&=\binom{d}{i}t^i(1-t)^{d-i}\sum_{j=0}^{n-d}\binom{n-d}{j}t^j(1-t)^{n-d-j}
=\sum_{j=0}^{n-d}\binom{d}{i}\binom{n-d}{j}t^{i+j}(1-t)^{n-(i+j)}.
\end{align*}
Reindexing the sum gives
$$
B_{d,i}(t)=\sum_{j=0}^{n}\binom{d}{i}\binom{n-d}{j-i}t^{j}(1-t)^{n-j}
=\sum_{j=0}^{n}\frac{\binom{d}{i}\binom{n-d}{j-i}}{\binom{n}{j}} \, B_{n,j}(t).
$$
Combining this identity with~\eqref{eq:Iteraf} proves~\eqref{eq:elevation-weights}.

By~\eqref{transpose}, $D^{(m)}$ equals
$$
\frac{m+1}{m+2} E_m^{\mathsf T}\cdot \frac{m+2}{m+3} E_{m+1}^{\mathsf T}\cdots \frac{d}{d+1} E_{d-1}^{\mathsf T}=\frac{m+1}{d+1} E_m^{\mathsf T} E_{m+1}^{\mathsf T}\cdots E_{d-1}^{\mathsf T}
=\frac{m+1}{d+1}\left(E_{d-1} E_{d-2}\cdots E_{m}\right)^{\mathsf T}.
$$
By~\eqref{eq:elevation-weights},
the $(i,j)$-entry of $E_{n-1}\cdot E_{n-2}\cdots E_{d+1}\cdot E_d$
is 
$
{\binom{d}{j}\binom{n-d}{i-j}}/{\binom{n}{i}}
$.
Substituting $n=d, d=m$,
we obtain that the $(i,j)$-entry of $E_{d-1} E_{d-2}\cdots E_{m}$ is
$
{\binom{m}{j}\binom{d-m}{i-j}}/{\binom{d}{i}}.
$
Hence,
$$
[D^{(m)}]_{i,j}=\frac{m+1}{d+1}
\left[ \left(E_{d-1} E_{d-2}\cdots E_{m}\right)^{\mathsf T} \right]_{i,j}
=\frac{m+1}{d+1}\frac{\binom{m}{i}\binom{d-m}{j-i}}{\binom{d}{j}}.
$$
A direct computation yields
\begin{equation*}
\frac{m+1}{d+1}
\frac{\binom{m}{i}\binom{d-m}{j-i}}{\binom{d}{j}}
=
\frac{(m+1)!(d-m)!}{(d+1)!}
\frac{j!}{i!(j-i)!}
\frac{(d-j)!}{(m-i)!(d-m-j+i)!}
=
\frac{\binom{j}{i}\binom{d-j}{m-i}}
{\binom{d+1}{m+1}}.
\end{equation*}
Consequently,
\begin{equation*}
w_i^{(m)}
=
(D^{(m)} w)_i
=
\sum_{j=0}^{d}
\frac{\binom{j}{i}\binom{d-j}{m-i}}
{\binom{d+1}{m+1}}\,w_j,
\quad 0\le i\le m.
\end{equation*}
This completes the proof.
\end{proof}

\begin{remark}
The matrix $E^{(n)}$ in Lemma~\ref{lem:E&D} appeared in~\cite{QRR11}, where it was used to explicitly compute higher-degree Bernstein coefficients and establish their uniform approximation to polynomial values.
\end{remark}

We use the two finite matrices $E_d$ and $D_d$ to construct an
LC-NIZ-preserving transform. 

\begin{lemma}\label{lem:averaging}
Let $w=(w_0,\ldots,w_d)$ be a sequence of nonnegative numbers.
Suppose that $w$ is LC-NIZ.
Then both  $E_dw$ and $D_dw$ are LC-NIZ.
\end{lemma}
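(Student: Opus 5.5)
The plan is to verify log-concavity of $E_dw$ and $D_dw$ directly, entry by entry, by expanding the quadratic expression $x_i^2-x_{i-1}x_{i+1}$. Both operators are bidiagonal, and this structure makes the expansion manageable. The no-internal-zeros part is immediate. Each entry of $E_dw$ is a combination of $w_{i-1}$ and $w_i$ with coefficients that are positive whenever the corresponding index lies in $\{0,\ldots,d\}$. Hence $\supp(E_dw)=\{a,\ldots,b+1\}$ whenever $\supp(w)=\{a,\ldots,b\}$. Likewise $\supp(D_dw)$ is the interval $\{a-1,\ldots,b\}\cap\{0,\ldots,d-1\}$. So only the inequalities $x_i^2\ge x_{i-1}x_{i+1}$ remain to be checked.

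For $E_d$, drop the harmless factor $1/(d+1)$ and set $x_i=a_iw_i+b_iw_{i-1}$ with $a_i=d+1-i$ and $b_i=i$, where $w_k=0$ outside $0\le k\le d$. Expanding gives three groups of terms. The first is $a_i^2w_i^2-a_{i-1}a_{i+1}w_{i-1}w_{i+1}$. The second is $b_i^2w_{i-1}^2-b_{i-1}b_{i+1}w_{i-2}w_i$. The third is the cross term $2a_ib_iw_iw_{i-1}-a_{i-1}b_{i+1}w_{i-1}w_i-a_{i+1}b_{i-1}w_{i-2}w_{i+1}$.

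Each group is estimated using log-concavity of $w$. In the first group, use $w_{i-1}w_{i+1}\le w_i^2$; since the coefficients are linear in $i$, $a_i^2-a_{i-1}a_{i+1}=1$, so this group is at least $w_i^2$. In the second group, similarly $b_i^2-b_{i-1}b_{i+1}=1$, so it is at least $w_{i-1}^2$. In the cross term, use $w_{i-2}w_{i+1}\le w_{i-1}w_i$, which is the form $w_iw_j\ge w_{i-1}w_{j+1}$ of LC-NIZ recorded before Lemma~\ref{lem:toeplitz}. A short computation gives
\[
2a_ib_i-a_{i-1}b_{i+1}-a_{i+1}b_{i-1}=-2,
\]
so the cross term is at least $-2w_iw_{i-1}$. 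Altogether $x_i^2-x_{i-1}x_{i+1}\ge w_i^2+w_{i-1}^2-2w_iw_{i-1}=(w_i-w_{i-1})^2\ge 0$.

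The main obstacle is exactly this deficit of $-2$ in the cross term. The naive "each coefficient sequence is log-concave" argument fails there, and the slack from the linear coefficients must be spent via AM--GM. The care needed is at the boundary indices $i=0,1,d,d+1$. There some products of coefficients such as $a_{i+1}$ or $b_{i-1}$ can be zero or formally negative, so I will check that every term whose coefficient would be problematic is multiplied by a vanishing $w_k$. Then the inequalities used (which need nonnegative multipliers) remain valid. Alternatively, I would treat these few indices by direct inspection.

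For $D_d$ the same scheme applies with $x_i=(d-i)w_i+(i+1)w_{i+1}$. The coefficients are again linear in $i$, so both squared groups gain slack $1$, giving $w_i^2$ and $w_{i+1}^2$. The cross-term combination again equals $-2$, and the same completion of the square $(w_i-w_{i+1})^2\ge0$ finishes the argument. Alternatively, one could derive the $D_d$ case from the $E_{d-1}$ case via \eqref{transpose} and reversal symmetry, but the direct recomputation is just as short.
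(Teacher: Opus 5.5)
Your proof is correct and is essentially the paper's argument. The paper writes $(d+1)(E_dw)_i=w_{i-1}\bigl(i+(d+1-i)r\bigr)$ with $r=w_i/w_{i-1}$, bounds $(E_dw)_{i\pm1}$ using the same log-concavity inequalities (this gives the same upper bound for $x_{i-1}x_{i+1}$ as yours), and ends with exactly your slack $w_{i-1}^2(1-r)^2=(w_{i-1}-w_i)^2$; its support argument also matches yours. Your expanded form handles zero entries directly, avoiding the paper's ``without loss of generality all $w_k>0$'', and it supplies the $D_d$ computation that the paper leaves to the reader. Your side remark about obtaining $D_d$ from $E_{d-1}$ via \eqref{transpose} and reversal is not justified, since the transpose of an LC-preserving matrix need not preserve log-concavity, but your direct computation makes that remark unnecessary.
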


\begin{proof}	
Without loss of generality, 
assume that all entries of $w$ are positive.
For $1\le i\le d$,
\begin{align*}
(d+1)(E_dw)_i&=i w_{i-1}+(d+1-i)w_i=w_{i-1}\,\left(i+\frac{(d+1-i)\,w_{i}}{w_{i-1}}\right).
\end{align*}
Since $w_{i-2}\le {w_{i-1}^2}/{w_i}$ and $w_{i+1}\le w_i^2/w_{i-1},$
we have 
\begin{align*}
(d+1)(E_dw)_{i-1}&=(i-1)w_{i-2}+(d+2-i)w_{i-1}\le w_{i-1}\left(\frac{(i-1)w_{i-1}}{w_i}+d+2-i\right),\\
(d+1)(E_dw)_{i+1}&=(i+1)w_{i}+(d-i)w_{i+1}\le w_{i}\left(i+1+\frac{(d-i)\,w_i}{w_{i-1}}\right).
\end{align*}
Thus, 
\begin{align*}
(d+1)^2(E_dw)_{i-1}(E_dw)_{i+1}&\le w_{i-1}w_i\left(\frac{(i-1)w_{i-1}}{w_{i}}+d+2-i\right)\left(i+1+\frac{(d-i)w_i}{w_{i-1}}\right)\\
&=w^2_{i-1}\left((i-1)+\frac{(d+2-i)w_i}{w_{i-1}}\right)\left(i+1+\frac{(d-i)w_i}{w_{i-1}}\right) \\
&\le w^2_{i-1}\left(i+\frac{(d+1-i)w_i}{w_{i-1}}\right)^2=(d+1)^2((E_dw)_i)^2.
\end{align*}
Hence, the sequence $E_dw$ is log-concave. 

Assume that the sequence $E_dw$ has an internal zero, i.e., there exist indices $i,j,k$ satisfying $i<k<j$ such that $(E_dw)_i>0$, $(E_dw)_k=0$, and $(E_dw)_j>0$. 
By~\eqref{eq:E}, if $(E_dw)_k=0$, then $w_{k-1}=w_k=0$. 
Moreover, since $(E_dw)_i>0$ and $(E_dw)_j>0$, it follows that the sequence $w$ has an internal zero, which yields a contradiction. 
Therefore, $E_dw$ is an LC-NIZ sequence.

The proof for $D_dw$ is similar, and we leave it to the interested reader.
\end{proof}

\begin{proposition}\label{prop:integral}
Let $w=(w_0,\ldots,w_d)$ be LC-NIZ. Define
\begin{equation*}
F(t)=\sum_{k=0}^d\binom dk w_k t^k(1-t)^{d-k}.
\end{equation*}
For every integer $m\geq0$, the vector $(u_i)_{0\le i\le m}$ is LC-NIZ,
where 
\begin{equation}\label{eq:integral-transform}
u_i
=(m+1)\binom{m}{i}
\int_0^1 t^i(1-t)^{m-i}F(t)\,dt,
\quad 0\leq i\leq m.
\end{equation}
If $w\neq0$, then $u_i>0$ for all $0\le i\le m$.
\end{proposition}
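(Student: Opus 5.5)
The plan is to realize the vector $u$ as a limit of vectors produced by finitely many applications of the matrices $E_\ell$ and $D_\ell$, whose LC-NIZ preservation is Lemma~\ref{lem:averaging}. We then pass to the limit, handling no internal zeros separately through strict positivity.

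\emph{Step 1 (finite approximants).} Fix $N\ge \max(d,m)$. Set $v^{(N)}=E^{(N)}w$, the degree-$N$ Bernstein coefficients of $F$ by Lemma~\ref{lem:E&D}. Then set $u^{(N)}=D^{(m)}v^{(N)}$, where $D^{(m)}$ is the product of $D$'s descending from degree $N$ to $m$. Applying Lemma~\ref{lem:averaging} once for each factor, $v^{(N)}$ and hence $u^{(N)}$ are LC-NIZ. By \eqref{eq:del-weights} (with $d$ replaced by $N$),
\begin{equation*}
u^{(N)}_i=\sum_{j=0}^{N}\frac{\binom{j}{i}\binom{N-j}{m-i}}{\binom{N+1}{m+1}}\,v^{(N)}_j .
\end{equation*}

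\emph{Step 2 (convergence).} For fixed $m,i$ and $N\to\infty$,
\begin{equation*}
\frac{\binom{j}{i}\binom{N-j}{m-i}}{\binom{N+1}{m+1}}=\frac{1}{N}\Bigl((m+1)\tbinom{m}{i}(j/N)^i(1-j/N)^{m-i}+O(1/N)\Bigr)
\end{equation*}
holds uniformly in $j$. The Bernstein coefficients also satisfy $\max_j|v^{(N)}_j-F(j/N)|\to0$; this is the uniform approximation property recalled in the remark after Lemma~\ref{lem:E&D}. It can also be checked directly from \eqref{eq:elevation-weights}, since the weights there form a hypergeometric distribution concentrated at $k/d\approx j/N$. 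Because all $v^{(N)}_j$ are bounded by $\max_k w_k$, $u^{(N)}_i$ is a Riemann sum for $(m+1)\binom{m}{i}\int_0^1 t^i(1-t)^{m-i}F(t)\,dt=u_i$, up to an $o(1)$ error. Hence $u^{(N)}\to u$ coordinatewise.

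\emph{Step 3 (limit and support).} The inequalities $(u^{(N)}_i)^2\ge u^{(N)}_{i-1}u^{(N)}_{i+1}$ pass to the limit, so $u$ is log-concave and nonnegative. If $w=0$, then $u=0$, which is trivially LC-NIZ. If $w\ne0$, then $F\ge0$ on $[0,1]$ and $F$ is a nonzero polynomial, so $F>0$ off a finite set. Since $t^i(1-t)^{m-i}>0$ on $(0,1)$, every $u_i>0$. This gives the final claim and rules out internal zeros, since positive support on all of $\{0,\dots,m\}$ is an interval.

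The main obstacle is Step 2: making the Riemann-sum limit rigorous with explicit uniformity in both the discrete Beta kernel and the Bernstein-coefficient approximation. A limit argument is needed because the finite-$N$ identity is not exact. A quick check with $m=d=1$ shows $u$ is not simply $D^{(m)}E^{(m+d)}w$.
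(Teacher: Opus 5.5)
Your proposal is correct and follows the paper's proof essentially step for step. It uses the same approximants $D_{m+1}\cdots D_N E_{N-1}\cdots E_d w$, the same uniform kernel asymptotics and Riemann-sum limit, and the same strict positivity of $u$ to rule out internal zeros. One small correction to your aside: since $d$ is fixed, the hypergeometric weights in \eqref{eq:elevation-weights} do not concentrate at $k/d\approx j/N$. Instead they converge uniformly in $j$ to the binomial weights $\binom{d}{k}(j/N)^k(1-j/N)^{d-k}$, as the rewriting \eqref{eq:falling-ratio} makes explicit, and this is why $v^{(N)}_j\approx F(j/N)$ rather than $v^{(N)}_j\approx w_{\lfloor jd/N\rfloor}$.
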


\begin{proof}
Take $n\geq\max(d,m)$.
Let 
$$
v^{(n)}=D_{m+1}\cdot D_{m+2}\cdots D_{n-1}\cdot D_n \cdot E_{n-1}\cdot E_{n-2}\cdots E_{d+1}\cdot E_d \cdot w.
$$
We show that $v_j^{(n)}$ converges to $u_j$ for $0\le j \le m$.

Let $w^{(n)}=E_{n-1}\cdot E_{n-2}\cdots E_{d+1}\cdot E_d \cdot w.$
Then by~\eqref{eq:elevation-weights},
$$
w_i^{(n)}
=\sum_{k=0}^d
\frac{\binom{d}{k}\binom{n-d}{i-k}}{\binom{n}{i}}\, w_k,
\quad 0\leq i\leq n.
$$
Recall that
$\fall xr=x(x-1)\cdots(x-r+1)$ with
$\fall x0=1$. Then
\begin{equation}\label{eq:falling-ratio}
w_i^{(n)}
=\sum_{k=0}^d
\binom dk
\frac{\fall ik\fall{n-d}{i-k}}{\fall ni}\, w_k=\sum_{k=0}^d
\binom dk
\frac{\fall ik\fall{n-i}{d-k}}{\fall nd}\, w_k.
\end{equation}
Note that 
$$
\frac{\fall ik\fall{n-i}{d-k}}{\fall nd}=\frac{\left(n^{-k} \fall ik\right) \left(n^{-d+k} \fall{n-i}{d-k}\right)}{n^{-d}\fall nd}.
$$
Recall that 
$$
F(i/n)=\sum_{k=0}^d\binom dk w_k (i/n)^k(1-i/n)^{d-k}.
$$
For each fixed $k$,
\begin{equation*}
\sup_{0\leq i\leq n}
\left|n^{-k}\fall ik-(i/n)^k\right|\xrightarrow{n\to\infty} 0,\quad  
\sup_{0\leq i\leq n}
\left|n^{-d+k}\fall {n-i}{d-k}-(1-i/n)^{d-k}\right|\xrightarrow{n\to\infty} 0
\end{equation*}
since $\fall xr$ is a polynomial in $x$ with
leading term $x^r$.
Hence,
\begin{equation}\label{eq:uniform-elevation}
\sup_{0\leq i\leq n}
|w_i^{(n)}-F(i/n)|\xrightarrow{n\to\infty} 0.
\end{equation}

By~\eqref{eq:del-weights},
\begin{equation}\label{eq:v}
v_i^{(n)}=\sum_{j=0}^n
\frac{\binom{j}{i}\binom{n-j}{m-i}}{\binom{n+1}{m+1}}\, w^{(n)}_j,\quad 0\leq i\leq m.
\end{equation}
Similarly, 
\begin{align*}
n \frac{\binom{j}{i}\binom{n-j}{m-i}}{\binom{n+1}{m+1}}
&=\frac{n(m+1)!}{i!(m-i)!}
\frac{\fall ji\fall{n-j}{m-i}}
{\fall{n+1}{m+1}}\\
&=n(m+1)\binom mi\frac{\fall ji\fall{n-j}{m-i}}
{\fall{n+1}{m+1}}\\
&=(m+1)\binom mi\frac{[n^{-i}\fall ji] [n^{-m+i}\fall{n-j}{m-i}]}
{n^{-m-1}\fall{n+1}{m+1}}.
\end{align*}
For each fixed $i\in\{0,\ldots,m\}$, 
\begin{equation}\label{eq:uniform-reduction}
\sup_{0\leq j\leq n}
\left|
n \frac{\binom{j}{i}\binom{n-j}{m-i}}{\binom{n+1}{m+1}}
-(m+1)\binom mi(j/n)^i(1-j/n)^{m-i}
\right|\xrightarrow{n\to\infty} 0.
\end{equation}

For $d\ge 1$, let $z=(1,1,\ldots,1)$.
By the definition of $D_d$,
\begin{equation*}
(D_dz)_i=\frac{(d-i)\cdot 1+(i+1)\cdot 1}{d+1}=1,
\quad 0\le i\le d-1.
\end{equation*}
Thus, $D_d$ maps the all-ones vector of length $d+1$ to
the all-ones vector of length $d$. 
Consequently,
$D_{m+1}\cdots D_n$ maps the all-ones vector of length $n+1$
to the all-ones vector of length $m+1$.
By~\eqref{eq:del-weights}, we obtain
\begin{equation*}
\sum_{j=0}^{n}
\frac{\binom{j}{i}\binom{n-j}{m-i}}
{\binom{n+1}{m+1}}
=1,
\quad 0\le i\le m.
\end{equation*}
Fix $i\in\{0,\ldots,m\}$. By~\eqref{eq:uniform-elevation} and~\eqref{eq:v}, 
\begin{equation*}
\left|
v_i^{(n)}-
\sum_{j=0}^{n}
\frac{\binom{j}{i}\binom{n-j}{m-i}}
{\binom{n+1}{m+1}}
F(j/n)
\right|
\, \le
\sum_{j=0}^{n}
\frac{\binom{j}{i}\binom{n-j}{m-i}}
{\binom{n+1}{m+1}}
\left|w_j^{(n)}-F(j/n)\right|
\, \le
\max_{0\le j\le n}
\left|w_j^{(n)}-F(j/n)\right|
\xrightarrow{n\to\infty} 0.
\end{equation*}
Moreover, $F(t)$ is bounded on $[0,1]$. Thus, by~\eqref{eq:uniform-reduction}, 
\begin{align*}
&\Biggl|
\sum_{j=0}^{n}
\frac{\binom{j}{i}\binom{n-j}{m-i}}
{\binom{n+1}{m+1}}
F(j/n)-
\frac{m+1}{n}\binom{m}{i}
\sum_{j=0}^{n}
(j/n)^i(1-j/n)^{m-i}F(j/n)
\Biggr|\\
&\quad\le
\frac{n+1}{n}\max_{0\le t\le 1}|F(t)|\cdot
\sup_{0\le j\le n}
\left|
n\frac{\binom{j}{i}\binom{n-j}{m-i}}
{\binom{n+1}{m+1}}
-
(m+1)\binom{m}{i}(j/n)^i(1-j/n)^{m-i}
\right|
\xrightarrow{n\to\infty} 0.
\end{align*}

Since $t^i(1-t)^{m-i}F(t)$ is continuous on $[0,1]$,
Riemann-sum convergence yields
\begin{equation*}
\frac{m+1}{n}\binom{m}{i}
\sum_{j=0}^{n}
(j/n)^i(1-j/n)^{m-i}F(j/n)
\xrightarrow{n\to\infty}
(m+1)\binom{m}{i}
\int_0^1 t^i(1-t)^{m-i}F(t)\,dt
=u_i.
\end{equation*}
Combining the preceding estimates, we obtain
\begin{equation*}
v_i^{(n)} \xrightarrow{n\to\infty} u_i,
\quad 0\le i\le m.
\end{equation*}

By
Lemma~\ref{lem:averaging}, $v^{(n)}$ is log-concave with no internal zeros for every $n$.  
In particular,
\begin{equation*}
\left(v_i^{(n)}\right)^2\ge v_{i-1}^{(n)}v_{i+1}^{(n)},
\quad 1\le i\le m-1.
\end{equation*}
Letting $n\to\infty$, we obtain
\begin{equation*}
u_i^2\ge u_{i-1}u_{i+1},
\quad 1\le i\le m-1.
\end{equation*}
Thus $(u_0,\ldots,u_m)$ is nonnegative and log-concave.
It remains to verify that there are no internal zeros.
If $w\ne 0$, then at least one $w_k$ is positive, while all
$w_k$ are nonnegative. Consequently,
\begin{equation*}
F(t)=
\sum_{k=0}^{d}\binom{d}{k}w_k t^k(1-t)^{d-k}>0,
\quad \text{for}\ 0<t<1.
\end{equation*}
Since $t^i(1-t)^{m-i}>0$ on $(0,1)$, it follows from~\eqref{eq:integral-transform}
that
\begin{equation*}
u_i
=
(m+1)\binom{m}{i}
\int_0^1 t^i(1-t)^{m-i}F(t)\,dt
>0,
\quad 0\le i\le m.
\end{equation*}
Hence the output has no internal zeros. If $w=0$, then
$F=0$ and $u_i=0$ for every $i$, so the conclusion is immediate.
This completes the proof.
\end{proof}

\begin{corollary}\label{coro:kernel-LC}
For $H=H^{d_1,d_2}$ in \eqref{eq:H}, both $H$ and
$H^\transpose$ map LC-NIZ vectors to LC-NIZ vectors.
The image of any nonzero nonnegative vector is strictly
positive.
\end{corollary}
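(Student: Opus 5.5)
The approach is to write each entry of $Hw$ as a Beta integral, so that $Hw$ becomes a positive constant multiple of the vector $(u_i)$ of Proposition~\ref{prop:integral}, applied to the reversal of $w$. The statement for $H^\transpose$ then follows by symmetry.

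\textbf{Step 1: Beta-integral form of the entries.} Observe that $(d_1+i-j)+(d_2+j-i)=d_1+d_2$ does not depend on $i,j$. Writing the binomials with factorials,
\begin{equation*}
H_{i,j}=\frac{(d_1+i-j)!\,(d_2+j-i)!}{i!\,(d_2-i)!\,j!\,(d_1-j)!}.
\end{equation*}
The Beta integral $\int_0^1 t^a(1-t)^b\,dt=a!\,b!/(a+b+1)!$ gives
\begin{equation*}
(d_1+i-j)!\,(d_2+j-i)!=(d_1+d_2+1)!\int_0^1 t^{d_1+i-j}(1-t)^{d_2+j-i}\,dt .
\end{equation*}

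\textbf{Step 2: split the integrand and apply Proposition~\ref{prop:integral}.} Factor the integrand as $\bigl[t^i(1-t)^{d_2-i}\bigr]\bigl[t^{d_1-j}(1-t)^{j}\bigr]$. For a nonnegative vector $w=(w_0,\ldots,w_{d_1})$ this yields
\begin{equation*}
(Hw)_i=\frac{(d_1+d_2+1)!}{d_1!\,d_2!}\binom{d_2}{i}\int_0^1 t^i(1-t)^{d_2-i}\,F(t)\,dt,
\end{equation*}
where
\begin{equation*}
F(t)=\sum_{k=0}^{d_1}\binom{d_1}{k}w_{d_1-k}\,t^k(1-t)^{d_1-k}.
\end{equation*}
Here $F$ is exactly the polynomial of Proposition~\ref{prop:integral} built from the reversed sequence $w'=(w_{d_1},\ldots,w_0)$. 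Reversal preserves both log-concavity and the absence of internal zeros, so $w'$ is LC-NIZ whenever $w$ is. Taking $m=d_2$, we get
\begin{equation*}
(Hw)_i=\frac{(d_1+d_2+1)!}{(d_2+1)\,d_1!\,d_2!}\,u_i,
\end{equation*}
a positive constant times $u_i$. Hence $Hw$ is LC-NIZ.

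\textbf{Step 3: strict positivity.} If $w\ne0$ is merely nonnegative, the positivity argument at the end of the proof of Proposition~\ref{prop:integral} still applies: it uses only $w'\ge0$ and $w'\ne0$, which give $F>0$ on $(0,1)$. Therefore every $u_i>0$, and so every $(Hw)_i>0$.

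\textbf{Step 4: the transpose.} From \eqref{eq:H}, $H^{d_2,d_1}_{j,i}=\binom{d_2+j-i}{j}\binom{d_1+i-j}{i}=H^{d_1,d_2}_{i,j}$. Thus $(H^{d_1,d_2})^\transpose=H^{d_2,d_1}$, and the previous steps with $d_1$ and $d_2$ interchanged cover $H^\transpose$.

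The only real obstacle is spotting the Beta-integral factorization in Step 1, and that rests on the observation that the two lower-factorial arguments always sum to $d_1+d_2$. After that, everything is bookkeeping of constants.
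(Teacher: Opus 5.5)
Your proposal is correct and follows the paper's proof essentially step by step. You write $H_{i,j}$ as a Beta integral, identify $Hw$ as the positive multiple $\frac{(d_1+d_2+1)!}{(d_2+1)\,d_1!\,d_2!}\,u$ of the vector from Proposition~\ref{prop:integral} (applied to the reversed sequence with $m=d_2$), and handle the transpose via $(H^{d_1,d_2})^\transpose=H^{d_2,d_1}$. The only cosmetic difference is that the paper gets strict positivity directly from the positivity of the entries of $H$, while you reuse the $F>0$ argument on $(0,1)$; both are valid.
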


\begin{proof}
Let $w=(w_0,\ldots,w_{d_1})$ be LC-NIZ and let
\begin{equation*}
F(t)=\sum_{k=0}^{d_1} w_k B_{d_1,k}(t)=\sum_{k=0}^{d_1}\binom{d_1}{k} w_k t^{k}(1-t)^{d_1-k},
\end{equation*}
and
\begin{equation*}
\widetilde F(t)=F(1-t)=\sum_{k=0}^{d_1}\binom{{d_1}}{k} w_k t^{{d_1}-k}(1-t)^k
=\sum_{k=0}^{d_1}\binom{{d_1}}{k} w_{d_1-k} t^{k}(1-t)^{d_1-k}.
\end{equation*}

The beta integral
$\int_0^1t^\alpha(1-t)^\beta\,dt
=\alpha!\beta!/(\alpha+\beta+1)!$, for
nonnegative integers $\alpha,\beta$, gives
\begin{equation*}
H_{i,j}
=\kappa_{d_1,d_2}\binom{d_2}{i}\binom{d_1}{j}
\int_0^1
t^{d_1+i-j}(1-t)^{d_2+j-i}\,dt,
\end{equation*}
where $\kappa_{d_1,d_2}:= \frac{(d_1+d_2+1)!}{d_1!\,d_2!}$. 
Then we have
\begin{align*}
(Hw)_i&=\sum_{k=0}^{d_1}H_{i,k}w_k=\sum_{k=0}^{d_1}w_k \kappa_{d_1,d_2}\binom{d_2}{i}\binom{d_1}{k}
\int_0^1
t^{d_1+i-k}(1-t)^{d_2+k-i}\,dt \\
&=\frac{\kappa_{d_1,d_2}}{d_2+1}\left( (d_2+1)\binom{d_2}{i}\int_0^1 t^i(1-t)^{d_2-i}\widetilde F(t)\,dt\right),
\end{align*}
for any $0\le i \le d_2$.
Since the sequence $(w_{d_1},\ldots,w_{0})$ is LC-NIZ,
Proposition~\ref{prop:integral} proves that $Hw$ is LC-NIZ.
All entries of $H$ are positive, so a nonzero
nonnegative input has a positive image regardless of
log-concavity.
Finally,
\begin{equation*}
(H^{d_1,d_2})^\transpose=H^{d_2,d_1},
\end{equation*}
which proves the statement for the transpose.
\end{proof}

\section{Proof of the main theorem}
\label{sec:proofmain}
In this section, we give the proof of Theorem~\ref {thm:main}.
First, we introduce the following result, which plays a key role in the proof.
\begin{lemma}\label{lem:diagonal}
Let $C=[C_{i,j}]_{i,j\ge 0}$ be a nonnegative matrix. 
Suppose that 
\begin{enumerate}
\item[{\rm(C1)}] $C$ is $\RR$,
\item[{\rm(C2)}] all rows and columns of $C$ are log-concave,
\item[{\rm(C3)}] the positive support $\supp((C_{k,k})_{k\ge0})$ is an integer interval (possibly empty).
\end{enumerate} 
Then $(C_{k,k})_{k\ge0}$ is LC-NIZ.
\end{lemma}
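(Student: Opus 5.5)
The plan is to prove the three-term inequality $C_{k,k}^2\ge C_{k-1,k-1}C_{k+1,k+1}$ for every $k\ge1$ directly. We chain two $2\times2$ reverse-regular minors along the diagonal with the log-concavity of row $k$ and column $k$. The no-internal-zeros part is exactly hypothesis (C3), so only the inequality needs work.

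Fix $k\ge1$ and write $x=C_{k-1,k-1}$, $y=C_{k,k}$, $z=C_{k+1,k+1}$.

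\emph{Step 1.} Apply (C1) to the consecutive $2\times2$ blocks with rows and columns $\{k-1,k\}$, and then to those with rows and columns $\{k,k+1\}$. This gives
\begin{equation*}
xy\le C_{k-1,k}\,C_{k,k-1},\qquad yz\le C_{k,k+1}\,C_{k+1,k}.
\end{equation*}
All entries are nonnegative, so multiplying yields
\begin{equation*}
xz\,y^2\le \bigl(C_{k,k-1}C_{k,k+1}\bigr)\bigl(C_{k-1,k}C_{k+1,k}\bigr).
\end{equation*}

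\emph{Step 2.} By (C2), row $k$ is log-concave, so $C_{k,k-1}C_{k,k+1}\le C_{k,k}^2=y^2$. Column $k$ is log-concave as well, so $C_{k-1,k}C_{k+1,k}\le y^2$. Only the plain three-term inequalities are used here, with no support condition on rows or columns. Combining with Step 1 gives $xz\,y^2\le y^4$.

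\emph{Step 3 (the only delicate point).} If $y>0$, we divide by $y^2$ and obtain $y^2\ge xz$. If $y=0$, this division is unavailable, and this is where (C3) enters. The positive support of the diagonal is an integer interval, and $k$ lies outside it. Hence $k-1$ and $k+1$ cannot both lie in it, so $x=0$ or $z=0$, and $xz=0=y^2$. The nonnegativity of $C$ makes the diagonal a nonnegative sequence. Together with (C3) and the inequalities above, this shows $(C_{k,k})_{k\ge0}$ is LC-NIZ in the sense of Definition~\ref{def:LC}.

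I expect no real obstacle. The only thing to guard against is the degenerate case $y=0$, which is precisely why (C3) is assumed rather than derived.
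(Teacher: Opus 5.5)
Your proof is correct, and it takes a different route from the paper's.

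You use the two adjacent diagonal minors, on rows and columns $\{k-1,k\}$ and $\{k,k+1\}$. You combine them with log-concavity of row $k$ and column $k$ through the centre entry. This gives $C_{k-1,k-1}C_{k+1,k+1}C_{k,k}^2\le C_{k,k}^4$, and you divide by $C_{k,k}^2$.

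The paper instead assumes $C_{k-1,k-1}C_{k+1,k+1}>0$. It uses the non-adjacent minor on rows and columns $\{k-1,k+1\}$ to force $C_{k+1,k-1}>0$. It then combines log-concavity of column $k-1$ and row $k+1$ with the minor on rows $\{k,k+1\}$, columns $\{k-1,k\}$, and divides by $C_{k+1,k-1}^2$.

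Your version is shorter, more symmetric, and needs no auxiliary positivity step. However, your divisor is $C_{k,k}^2$ itself, so the three-term inequality in the case $C_{k,k}=0$ rests on (C3). In the paper's argument, diagonal log-concavity follows from (C1) and (C2) alone, and (C3) is used only to exclude internal zeros. As a by-product, the paper shows that $C_{k-1,k-1}C_{k+1,k+1}>0$ forces $C_{k,k}>0$. Since (C3) is a hypothesis of the lemma, this difference does not affect the correctness of your proof.
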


\begin{proof}
By {\rm(C3)}, it suffices to prove that $(C_{k,k})_{k\ge0}$ is log-concave.
Assume that $C_{k-1,k-1}C_{k+1,k+1}>0$.
We first show that $C_{k+1,k-1}$ is strictly positive. 
By {\rm(C1)}, 
\begin{equation}\label{eq:diag-pivot}
C_{k-1,k-1}C_{k+1,k+1} \leq C_{k-1,k+1}C_{k+1,k-1}.
\end{equation}
Since the left-hand side of \eqref{eq:diag-pivot} is strictly positive, the right-hand side is also strictly positive, which immediately implies $C_{k+1,k-1}>0$.

By {\rm(C2)}, we have
\begin{align}
C_{k,k-1}^2
&\geq C_{k-1,k-1}C_{k+1,k-1},
\label{eq:diag-column}\\
C_{k+1,k}^2
&\geq C_{k+1,k-1}C_{k+1,k+1}.
\label{eq:diag-row}
\end{align}
Multiplying \eqref{eq:diag-column} and \eqref{eq:diag-row} yields
\begin{equation}\label{eq:diag-prod}
(C_{k,k-1}C_{k+1,k})^2 \geq C_{k-1,k-1}C_{k+1,k+1}C_{k+1,k-1}^2.
\end{equation}
On the other hand, since $C$ is $\mathrm{RR}_2$, we have 
\begin{equation}\label{eq:diag-RR}
C_{k,k-1}C_{k+1,k} \leq C_{k,k}C_{k+1,k-1}.
\end{equation}
By~\eqref{eq:diag-prod} and~\eqref{eq:diag-RR}, we obtain that
\begin{equation*}
C_{k-1,k-1}C_{k+1,k+1}C_{k+1,k-1}^2 \leq (C_{k,k-1}C_{k+1,k})^2 \leq C_{k,k}^2 C_{k+1,k-1}^2.
\end{equation*}
Since $C_{k+1,k-1} > 0$, dividing both sides by the strictly positive factor $C_{k+1,k-1}^2$ yields
\begin{equation*}
C_{k,k}^2 \geq C_{k-1,k-1}C_{k+1,k+1},
\end{equation*}
which completes the proof.
\end{proof}

\begin{proof}[Proof of Theorem~\ref{thm:main}]
Write $d_1=\deg p$, $d_2=\deg q$.
Let $(a_i)$ and $(b_j)$
be the LC-NIZ coefficient sequences of $\W(p)$ and $\W(q)$, respectively, extended by zero outside their index ranges.
Set $H=H^{d_1,d_2}$ and define the matrix $C=[C_{r,s}]_{0\le r,s \le d_1+d_2}$ with entries
\begin{equation}\label{eq:C}
C_{r,s}=\sum_{i=0}^{d_2}\sum_{j=0}^{d_1}
H_{i,j}a_{r-i}b_{s-j}.
\end{equation}
By Lemma~\ref{coefficients}, the diagonal sequence $(C_{k,k})_{0\le k \le d_1+d_2}$ of $C$ is the coefficient sequence of $\W(pq)$, where
\begin{equation}\label{eq:Ck}
C_{k,k}=\sum_{i=0}^{d_2} \sum_{j=0}^{d_1}
H_{i,j}a_{k-i}b_{k-j}.
\end{equation}
By Lemma~\ref{lem:diagonal}, it suffices to prove that the matrix $C$ satisfies conditions {\rm(C1)}, {\rm(C2)} and {\rm(C3)} therein.

\medskip
\noindent
\textbf{Condition {\rm(C1)}.}
Let $T_a$ denote the Toeplitz matrix $[a_{r-i}]$ with rows $0\le r \le d_1+d_2$ and columns $0\leq i\leq d_2$, and let $T_b$ denote $[b_{s-j}]$ with rows $0\le s \le d_1+d_2$ and columns $0\leq j\leq d_1$.
Equation~\eqref{eq:C} is equivalent to the matrix factorization
\begin{equation*}
C=T_a H T_b^\transpose.
\end{equation*}
By Lemma~\ref{lem:toeplitz}, $T_a$ and $T_b$ are $\TP$, and transposition preserves this property. Since $H$ is $\RR$ by Proposition~\ref{lem:H-RR}, Lemma~\ref{lem:composition} implies that $C$ is $\RR$.

\medskip
\noindent
\textbf{Condition {\rm(C2)}.}
Fix $s \in \{0, \dots, d_1+d_2\}$. The $s$-th column of $C$ is given by
\begin{equation*}
C_{r,s}=\sum_{i=0}^{d_2}a_{r-i}\left(H \beta^{(s)}\right)_i \quad \text{for} \ 0\le r \le d_1+d_2,
\end{equation*}
where the column vector $\beta^{(s)}=(b_{s-j})_{j=0}^{d_1}$ is obtained by reversing and restricting the sequence $(b_j)$. 
Clearly, $\beta^{(s)}$ is either LC-NIZ or identically zero.
By Corollary~\ref{coro:kernel-LC} and Lemma~\ref{lem:composition}, the column sequence $(C_{r,s})_{0\le r \le d_1+d_2}$ is LC-NIZ. 
Similarly, for each fixed $r\in \{0, \dots, d_1+d_2\}$, the $r$-th row of $C$ is LC-NIZ.

\medskip
\noindent
\textbf{Condition {\rm(C3)}.}
Since every entry $H_{i,j}$ is strictly positive and the sequences $a$ and $b$ are nonnegative, 
\eqref{eq:Ck} implies that
$C_{k,k}>0$ if and only if there exists a pair of indices $(i, j) \in [0, d_2] \times [0, d_1]$ such that
\begin{equation*}
a_{k-i} > 0 \quad \text{and} \quad b_{k-j} > 0.
\end{equation*}
Let $\supp(a)=[L_a,U_a]$ and $\supp(b)=[L_b,U_b]$.
By definition of the positive supports, this condition is equivalent to the simultaneous existence of $i \in [0, d_2]$ and $j \in [0, d_1]$ satisfying
\begin{equation*}
k-i \in [L_a, U_a] \quad \text{and} \quad k-j \in [L_b, U_b]. 
\end{equation*}
Such an index $i$ exists if and only if $k \in [L_a, U_a + d_2]$, and such an index $j$ exists if and only if $k \in [L_b, U_b + d_1]$. Because the choices of $i$ and $j$ are independent, a valid pair $(i, j)$ exists if and only if
\begin{equation*}
k \in [L_a, U_a + d_2] \cap [L_b, U_b + d_1] = [\max(L_a, L_b), \, \min(U_a + d_2, U_b + d_1)].
\end{equation*}
Since $L_a\leq d_1\leq U_b+d_1$ and $L_b\leq d_2\leq U_a+d_2$, this intersection is a nonempty integer interval.
Hence, the positive support of the diagonal sequence $(C_{k,k})_{0\le k \le d_1+d_2}$ is an integer interval.

Consequently, Lemma~\ref{lem:diagonal} implies that $(C_{k,k})$ is LC-NIZ, which completes the proof.
\end{proof}

\begin{remark}
In Theorem~\ref{thm:main}, the assumption that the coefficient sequences have no internal zeros is essential.
Consider the polynomials
\begin{equation*}
p(t)=\binom{t+3}{3}+\binom{t}{3},\quad q(t)=t.
\end{equation*}
Then $\W(p)(x)=1+x^3$ and $\W(q)(x)=x$.
Both of these polynomials are log-concave, but $\W(p)(x)$ has internal zeros.
However, we have
\begin{equation*}
\W(pq)(x)=4x+3x^3+x^4,
\end{equation*}
which is not log-concave.
\end{remark}
\section{Applications}
\label{sec:app}
In this section, we present an application of Theorem~\ref{thm:main} to Ehrhart theory.
A \emph{lattice polytope} is the convex hull of finitely many points in $\mathbb Z^d$. Let $P\subseteq\mathbb R^d$ be a lattice polytope of dimension $d$, and put
\begin{equation*}
L_P(n)=\#(nP\cap\mathbb Z^d),\quad n\in\mathbb Z_{\geq0}.	
\end{equation*}
Ehrhart's theorem states that $L_P(n)$ agrees with a polynomial in $n$ of degree $d$. Its generating function, called the \emph{Ehrhart series}, has the form
\begin{equation*}
\operatorname{Ehr}_P(x)=\sum_{n\geq0}L_P(n)x^n
=\frac{h_P^*(x)}{(1-x)^{d+1}}.
\end{equation*}
The numerator is called the~\emph{$h^*$-polynomial} of $P$.
Thus $h_P^*=\W(L_P)$. 
We refer the reader to~\cite{BR15} for more details on Ehrhart theory. 

Let $P\subseteq\mathbb R^d$ and $Q\subseteq\mathbb R^e$ have dimensions $d$ and $e$. Their Cartesian product is a lattice polytope in the product lattice $\mathbb Z^d\times\mathbb Z^e$. For every $n\geq0$, we have
\begin{equation*}
L_{P \times Q}(n)=\# \left((nP \cap \mathbb{Z}^d) \times (nQ \cap \mathbb{Z}^e)\right)=L_P(n) L_Q(n),\quad
\dim(P\times Q)=d+e.
\end{equation*}
The Ehrhart series of $P\times Q$ is thus the Hadamard product of the two Ehrhart series. In particular,
\begin{equation*}
h_{P\times Q}^*=\W(L_PL_Q).
\end{equation*}

Ferroni and Higashitani~\cite{FH24} posed the following question: Let $P$ and $Q$ be two lattice polytopes. If $h_P^*(x)$ and $h_Q^*(x)$ are LC-NIZ, is  $h_{P\times Q}^*(x)$ necessarily LC-NIZ? By Theorem~\ref{thm:main}, we answer this question affirmatively.
\begin{corollary}\label{cor:cartesian-products}
If the $h^*$-polynomials of lattice polytopes $P$ and $Q$ are LC-NIZ, then so is $h_{P\times Q}^*$. 
More generally, this conclusion holds for $P_1\times\cdots\times P_m$ whenever each $h_{P_i}^*$ is LC-NIZ.
\end{corollary}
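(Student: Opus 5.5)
The plan is to derive the first statement directly from Theorem~\ref{thm:main} and the identity $h_{P\times Q}^*=\W(L_PL_Q)$ established above. Write $p=L_P$ and $q=L_Q$. These are nonzero polynomials of degrees $d=\dim P$ and $e=\dim Q$, and by definition $h_P^*=\W(p)$ and $h_Q^*=\W(q)$. If both are LC-NIZ, Theorem~\ref{thm:main} says $\W(pq)$ is LC-NIZ. Since $L_{P\times Q}=L_PL_Q$ as polynomials, this is exactly $h_{P\times Q}^*$.

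For the general statement I would induct on $m$. The case $m=1$ is trivial and $m=2$ is the case just done. For the inductive step, set $R=P_1\times\cdots\times P_{m-1}$. This is again a lattice polytope, of dimension $\sum_{i<m}\dim P_i$, in the product lattice. By the inductive hypothesis $h_R^*$ is LC-NIZ. Since $P_1\times\cdots\times P_m=R\times P_m$, the two-factor case applied to $R$ and $P_m$ gives the claim.

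Alternatively, one can phrase this purely algebraically. Show by induction that if $\W(p_1),\dots,\W(p_m)$ are LC-NIZ, then $\W(p_1\cdots p_m)$ is LC-NIZ: apply Theorem~\ref{thm:main} to $p_1\cdots p_{m-1}$ and $p_m$, noting that $\deg(p_1\cdots p_{m-1})=\sum_{i<m}\deg p_i$, so that $\W$ is taken with respect to the correct degree. Then use $L_{P_1\times\cdots\times P_m}=\prod_i L_{P_i}$.

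The only point needing care is a bookkeeping check, not a real obstacle. $\W(\cdot)$ is defined using the exact degree of its argument, so I must confirm that $\deg L_R=\dim R$ and that the denominator exponent $(1-x)^{\dim R+1}$ matches the one in \eqref{eq:W-definition}. This holds because the leading coefficient of an Ehrhart polynomial is the (positive) relative volume, so degrees add under products and no cancellation occurs.
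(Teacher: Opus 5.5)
Your proposal is correct and follows the paper's route: the corollary comes from applying Theorem~\ref{thm:main} to $p=L_P$, $q=L_Q$ via the identity $h^*_{P\times Q}=\W(L_PL_Q)$, and the $m$-factor case follows by the induction on $m$ you describe. Your degree check is also sound, though it needs less than you invoke: degrees of nonzero real polynomials always add under multiplication, so $\deg L_R=\dim R$ holds automatically.
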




\begin{thebibliography}{99}
\bibitem{AF11}
J.-P. Allouche, M. Mend\`es~France, \textit{Hadamard grade of power series}, J. Number Theory {\bf 131} (2011), no.~11, 2013--2022.

\bibitem{BR15}
M. Beck, S. Robins, \textit{Computing the continuous discretely}, second edition, Undergraduate Texts in Mathematics, Springer, New York, 2015.

\bibitem{BFJ24}
P. Br{\"a}nd{\'e}n, L. Ferroni, K. Jochemko, \textit{Preservation of inequalities under Hadamard products}, Trans. Amer. Math. Soc., to appear,
arXiv:2408.12386v2.

\bibitem{FH24}
L. Ferroni, A. Higashitani, \textit{Examples and counterexamples in Ehrhart theory}, EMS Surv. Math. Sci. (2024).

\bibitem{FK14}
I. Fischer, M. Juhnke-Kubitzke, \textit{Spectra and eigenvectors of the Segre transformation}, Adv. in Appl. Math. {\bf 56} (2014), 1--19.

\bibitem{Hog74}
S.G. Hoggar, \textit{Chromatic polynomials and logarithmic concavity}, J. Combin. Theory Ser. B {\bf 16} (1974), 248--254.

\bibitem{Kar68}
S. Karlin, \textit{Total positivity. Vol. I}, Stanford Univ. Press, Stanford, CA, 1968.

\bibitem{Lor86}
G.G. Lorentz, \textit{Bernstein polynomials}, second edition, 
Chelsea, New York, 1986.


\bibitem{Pen99}
J.M. Pe\~na (Ed.), \textit{Shape preserving representations in computer-aided geometric design}, Nova Science Publishers, Commack, NY, 1999.

\bibitem{QRR11}
W. Qian, M.D. Riedel, I.G. Rosenberg, \textit{Uniform approximation and Bernstein polynomials with coefficients in the unit interval}, European J. Combin. {\bf 32} (2011), no.~3, 448--463.

\bibitem{Sta12}
R.P. Stanley, \textit{Enumerative combinatorics. Volume 1}, second edition, Cambridge Univ. Press, Cambridge, 2012.


\bibitem{Wag92}
D.G. Wagner, \textit{Total positivity of Hadamard products}, J. Math. Anal. Appl. {\bf 163} (1992), no.~2, 459--483.
\end{thebibliography}
\end{document}